\newcommand{\link}[2]{\href{#1}{#2 \faExternalLink}}
\newcommand{\file}[1]{\lstinline{#1}}
\title{Formalising the Bruhat--Tits Tree}
\author[J. Ludwig and C. Merten]{Judith Ludwig and Christian Merten}
\begin{abstract}
In this article we describe the formalisation of the Bruhat--Tits tree---an important tool in modern number theory---in the Lean Theorem Prover. Motivated by the goal of connecting to ongoing research, we apply our formalisation to verify a result about harmonic cochains on the tree.
\end{abstract}
\keywords{Lean, Bruhat--Tits tree, Cartan decomposition}
\begin{document}

\section{Introduction}
The Bruhat--Tits tree is a combinatorial object that serves as a powerful tool in number theory and arithmetic geometry. 
In the simplest setting of the field $\Q_p$ of $p$-adic numbers, the Bruhat--Tits tree is constructed from lattices in $\Q_p^2$ and is a regular infinite tree, where each vertex has $p+1$ neighbours. Its geometry is characterised by its simplicity. However its connections to the structure theory and the representation theory of the group $\GL_2(\Q_p)$ are deep and intricate. 

In this article we report on the formalisation of this useful mathematical concept in the Lean Theorem Prover. Our formalisation adds to the ongoing effort of formalising graduate level concepts in pure mathematics and more specifically in number theory. 

For the formalisation of the Bruhat--Tits tree, we pass to a more general setting and replace $\Q_p$ by any discrete valuation field $K$. Before we describe our project in more detail, let us say a few more words on the applications of the Bruhat--Tits tree. For that it is useful to furthermore assume that $K$ is complete and has finite residue field. 
In this setting, the Bruhat--Tits tree is a powerful tool for the study of subgroups of $\mathrm{GL}_2(K)$ (as well as the closely related groups $\mathrm{SL}_2(K)$ and $\mathrm{PGL}_2(K)$) and their homology, see e.g. \cite[Chapter II]{serre}. Prominently, as explained in op.\ cit.\ it can be used to show Ihara's theorem, that every torsion free subgroup of $\SL_2(K)$ is free. The Bruhat--Tits tree is important in other parts of number theory, arithmetic geometry and representation theory, e.g., via its connection to Drinfeld's upper half plane~$\Omega$ (see \cite{dasgupta-teitelbaum}) or through the relation to classical as well as function field modular forms via harmonic cochains (see e.g.\ \cite{teitelbaum1, teitelbaum2}). The Bruhat--Tits tree has also been explored in mathematical physics (see e.g. \cite{zabrodin, physics}).

Bruhat--Tits theory puts the Bruhat--Tits tree into a larger number theoretic context. Keeping the setting of a non-archi-medean local field~$K$ for simplicity, let us remark that $\GL_2/K$ is an example of a connected reductive group. Now much more generally, to any connected reductive group~$G/K$ one can associate a so called Bruhat--Tits building, a contractible topological space that carries the structure of a polysimplicial complex equipped with an action of $G(K)$. Bruhat--Tits buildings are a key tool for the structure theory of reductive groups, in spirit analogous to symmetric spaces of real reductive Lie groups. We refer to the introduction of \cite{kaletha-prasad} for an overview of the theory. The simplest non-trivial example is the building (as in \cite[Section 4.1]{kaletha-prasad}) for $G=\mathrm{GL}_2$, where one recovers the Bruhat--Tits tree. Figure \ref{BTpic} is an illustration of the Bruhat--Tits tree for $\Q_2$, the field of $2$-adic numbers.

\begin{figure}
\vspace{.5cm}
\begin{center}
    
\begin{tikzpicture}[
		grow cyclic,
		level distance=1.35cm,
		level/.style={
			level distance/.expanded=\ifnum#1>1 \tikzleveldistance/1.5\else\tikzleveldistance\fi,
			nodes/.expanded={\ifodd#1 fill\else fill\fi}
		},
		level 1/.style={sibling angle=120},
		level 2/.style={sibling angle=100},
		level 3/.style={sibling angle=75},
		level 4/.style={sibling angle=60},
		level 5/.style={sibling angle=40, edge from parent/.style={draw, dashed}},
		edge from parent/.style={draw, semithick},
		nodes={circle,draw,inner sep=+0pt, minimum size=2pt},
		]
		\path[rotate=30]
		node [circle,draw, fill=black, inner sep=0pt, minimum size=2.5pt]{}
		child foreach \cntI in {1,...,3} {
			node[circle,minimum size=2.5pt]{}
			child foreach \cntII in {1,...,2} { 
				node[circle,minimum size=2pt]{}
				child foreach \cntIII in {1,...,2} {
					node[circle,minimum size=1.55pt]{}
					child foreach \cntIV in {1,...,2} {
						node[circle,minimum size=1.25pt]{}
						child foreach \cntV in {1,...,2} {}
						          	node[circle,minimum size=1pt]{}
						         	child foreach \cntV in {1,...,2} {}
					}
				}
			}
		};
		\end{tikzpicture}		
        \end{center}

        \caption{The Bruhat--Tits tree for $\Q_2$}\label{BTpic}
        \vspace{-.5cm}
\end{figure}		

We review the construction of the Bruhat--Tits tree and describe its formalisation in detail in Section \ref{sec:BT}. For now let us mention that it is constructed as a simple graph, i.e., a graph such that any two vertices are connected by at most one edge, and such that no edge connects a vertex to itself; it has no loops. Moreover, it is connected and has no cycles making it into a so-called tree. As the edges out of any vertex are in bijection with points in the projective line over the residue field, the graph is regular, i.e., the edges out of any vertex have the same cardinality.

The vertices of the Bruhat--Tits tree correspond to certain equivalence classes of lattices in $K^2$. To define the edges of this graph and to understand that it is indeed a tree, it is necessary to relate lattices to each other, i.e., to develop a concept of distance between lattices and to understand chains of lattices. This is achieved by studying how bases of lattices can be transformed into standard shapes and for that one uses a decomposition of the group $\GL_2(K)$ called the \emph{Cartan decomposition}. 

This decomposition exists for $\GL_n(K)$ (and more generally for connected reductive groups) and is of independent interest. In the context of $p$-adic fields for example, it has consequences for the theory of smooth representations of the $p$-adic group $\GL_n(K)$. In Section \ref{sec:cartan} we give the precise statement, more details on context and describe the formalisation of this decomposition.

Our formalisation adds to the ongoing formalisation effort of modern number theory. It is moreover motivated by connecting to current research. An ongoing research project of one of us (J.L.) prominently features the Bruhat--Tits tree via harmonic cochains. These are certain functions on the edges of the tree connecting to representation theory of $\GL_2$ and to rigid analytic functions on Drinfeld's period domain. As we strive to write up our research findings in a way that is error-free, accessible, understandable and unambiguous, we are intrigued by the idea that one day formalisation and verification tools will effortlessly support this process and improve on our documentation. This is still \emph{Zukunftsmusik}, particularly when using the word effortless. But in an attempt to turn up the volume we applied our formalisation of the Bruhat--Tits tree to verify a result about harmonic cochains and gained some mathematical insights along the way. This application is explained in Section~\ref{sec:application} below. We refer to Section \ref{subsec:FOR} for further discussion on the research context. 

To our knowledge, the Bruhat--Tits tree has never been formalised before in a proof assistant. The code for this project is available at the public Github repository
\link{https://github.com/chrisflav/bruhat-tits}{}. We organised our roughly 8000 lines of code into five folders. The folder \file{Graph} contains the main constructions regarding the Bruhat--Tits tree, with the file \file{Graph/Tree.lean} containing the main theorem \lstinline{theorem BTtree} \link{https://github.com/chrisflav/bruhat-tits/blob/b8d0ceceb5cd243b4a4c20be816d591c319e77e9/BruhatTits/Graph/Tree.lean\#L213}{}. We refer to the Readme
\link{https://github.com/chrisflav/bruhat-tits/blob/b8d0ceceb5cd243b4a4c20be816d591c319e77e9/README.md}{} for a detailed overview of the structure of the repository.

We accompany our explanations of the formalisation by some Lean code snippets. We have edited some of them for clarity of exposition. All links to declarations in
mathlib or in our repository are pinned to a specific commit and will therefore remain valid.

\subsection{Lean and mathlib}
This formalisation project was done in the interactive theorem prover Lean 4, which
is also a functional programming language and is based on the Calculus of Inductive Constructions.
For more background on Lean, we refer to \cite{lean4, tpil}. We use
the stable version \lstinline{v4.19.0}.

Our project is based on Lean's monolithic mathematical library
\lstinline{mathlib4}\link{https://github.com/leanprover-community/mathlib4}{}, an extensive
and fast growing open source library of mathematics \cite{mathlib}. At the time of writing
mathlib contains roughly 100 000 definitions and 200 000 theorems. The mathlib community
strives to build a unified library of mathematics. Our project profited from this, as in
particular mathlib contains both the definition of a discrete valuation ring and that of a
regular tree. 
Furthermore we were able to build on the formalisation of foundational undergraduate material
from linear algebra and algebra, importantly, the theory of modules over a ring. Working with
the graph theory part of the library was quite pleasant, as almost all prerequisites
that we needed (such as the notion of a simple graph and a regular tree) were at our disposal. 

Merely in the context of formalising the Cartan decomposition our project would have 
benefited from more existing API on matrices, as this result requires some tedious manipulation
of matrices. 

We have started to integrate parts of the project into mathlib. We aim to fully integrate the Cartan decomposition as well as the formalisation of the Bruhat--Tits tree. For more details see Section \ref{sec:integration}.

\section{The Cartan decomposition of $\mathrm{GL}_n(K)$}\label{sec:cartan}

Let $R$ be a discrete valuation ring, with uniformiser $\varpi$ and fraction field $K$. Let $\mathrm{val}$ denote the valuation on $K$. Note that here and in mathlib by valuation we mean the absolute value (i.e., the norm) and not the additive valuation.
Consider the group $\GL_n(K)$ of invertible $n \times n$ matrices. Let $T(K)$ be the subgroup of diagonal matrices and let $T^{-}$ be the subset of~$T(K) $ given by
\[
T^{-}= \{\mathrm{diag}(\varpi^{f_1}, \ldots , \varpi^{f_n}) \, : \, f_1 \geq \ldots \geq f_n \in \Z \}. 
\]
Then we have
\begin{theorem}[Cartan decomposition]
There is a decomposition of $\GL_n(K)$ into a disjoint union of double cosets
\begin{equation}\label{cartan}
    \GL_n(K) = \bigsqcup_{t \in T^{-}} \GL_n(R) \cdot t \cdot \GL_n(R).
\end{equation}
\label{thm:cartan-decomp}
\end{theorem}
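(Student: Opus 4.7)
The plan is to split the statement into existence --- every $g \in \GL_n(K)$ lies in some $\GL_n(R) \cdot t \cdot \GL_n(R)$ with $t \in T^-$ --- and disjointness of the double cosets for distinct $t \in T^-$. Since $R$ is a DVR, hence a PID, the central tool is the Smith Normal Form for matrices over $R$, which should be accessible from mathlib's structure theory for finitely generated modules over a PID.

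For existence, first I would clear denominators: given $g \in \GL_n(K)$, choose $N \in \Z$ so that $\varpi^{-N} g$ has entries in $R$. Smith Normal Form applied to this matrix produces $u, v \in \GL_n(R)$ and a diagonal $d \in M_n(R)$ whose entries $d_i$ satisfy $d_i \mid d_{i+1}$; since the nonzero elements of $R$ are units times powers of $\varpi$ and divisibility in $R$ translates to $\leq$ on valuations, we may absorb the units into $v$ and write $d = \mathrm{diag}(\varpi^{e_1}, \ldots, \varpi^{e_n})$ with $e_1 \leq \cdots \leq e_n$. The long Weyl element $w_0 \in \GL_n(R)$ reverses this order, so $\varpi^{N} w_0 d w_0^{-1}$ lies in $T^-$, and absorbing $w_0$ into $u$ and $w_0^{-1}$ into $v$ yields the required decomposition $g = u' t v'$.

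For disjointness, I would extract numerical invariants of a double coset. A convenient choice is the valuations of the minors of each size: for $1 \leq k \leq n$ set
\[
m_k(g) := \min_{|I| = |J| = k} \mathrm{val}(\det g_{I,J}),
\]
where $I, J$ range over $k$-element subsets of $\{1, \ldots, n\}$ and we use the convention $\mathrm{val}(0) = \infty$. Since left and right multiplication by $\GL_n(R)$ sends each $k \times k$ minor to an $R$-linear combination of $k \times k$ minors of $g$ (and vice versa, by invertibility), the function $m_k$ is constant on double cosets $\GL_n(R) g \GL_n(R)$. A direct computation on a diagonal matrix $t = \mathrm{diag}(\varpi^{f_1}, \ldots, \varpi^{f_n}) \in T^-$ shows that the only nonvanishing $k \times k$ minors are the principal ones and that the minimising choice uses the $k$ smallest exponents, giving $m_k(t) = f_n + f_{n-1} + \cdots + f_{n-k+1}$. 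The $f_i$ are then recovered as successive differences $m_k - m_{k-1}$, so the tuple $(f_1, \ldots, f_n)$, and hence $t$, is determined by $g$.

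The main obstacle is the disjointness half. Existence is essentially a repackaging of Smith Normal Form plus bookkeeping around denominators and the Weyl reversal, and should cleanly reuse mathlib's structure theory. Disjointness, by contrast, requires a carefully chosen invariant together with a proof that it is stable under $\GL_n(R)$ on both sides --- in a proof assistant this translates into somewhat tedious manipulations of determinants of submatrices via Cauchy--Binet-style identities. This is precisely the kind of index-heavy matrix calculation that, as the authors indicate elsewhere in the introduction, the current mathlib matrix API is not yet fully prepared to support smoothly.
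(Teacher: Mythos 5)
Your proposal is mathematically sound, but it takes a genuinely different route from the paper on both halves. For existence, the paper explicitly considers and rejects the Smith normal form route you outline, instead formalising a direct Gaussian-elimination-style algorithm (inspired by the mathlib transvection development): use permutation matrices in $\GL_n(R)$ to move an entry of maximal valuation to the corner, clear the last row and column by row/column operations with $R$-coefficients, and induct on $n$. The stated reason is that this is simpler and, crucially, most of the argument works over an arbitrary (not necessarily discrete) valuation ring --- discreteness is invoked only at the very end to rewrite the diagonal entries as unit multiples of powers of $\varpi$; your SNF route, by contrast, needs the full PID machinery from the start. For uniqueness, your invariant $m_k(g) = \min_{|I|=|J|=k} \mathrm{val}(\det g_{I,J})$ is the classical invariant-factor approach and is correct, but it requires Cauchy--Binet-type bookkeeping over all $k \times k$ minors --- exactly the index-heavy matrix work you flag as the obstacle. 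The paper sidesteps this: it reduces to the situation $t\, k_1\, t' = k_2$ with $k_1, k_2 \in \GL_n(R)$ and $t, t'$ diagonal with $\varpi$-power entries, and then argues much more cheaply using only the fact that $|\det x| = 1$ for $x \in \GL_n(R)$, concluding that $t'$ is a permutation of $t^{-1}$-type comparisons. So your plan would work, but the paper's choices were made precisely to avoid the two sources of pain you correctly anticipate (heavy reliance on PID structure theory, and minor/Cauchy--Binet manipulations in the matrix API).
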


The Cartan decomposition (for $\GL_2(K)$) is important for the construction of the Bruhat--Tits tree (see below). There is an analogous but historically earlier decomposition for Lie groups and Lie algebras with the same name (and which we do not formalise).

The proof of Theorem \ref{thm:cartan-decomp} that we formalise can be described as a Gaussian type algorithm. Alternatively, 
the decomposition can be deduced from the theory of the Smith normal form for matrices over a PID. The advantage of the former is that it is simpler and in parts works over general valuation rings. 

An analogue of the Cartan decomposition exists more generally for connected reductive groups over non-archimedean local fields $K$. For a reference in this generality, see e.g.\ \cite[Proposition 4.4.3]{bruhat-tits}.

When $K$ is a non-archimedean local field, the Cartan decomposition is also an essential result in the theory of smooth representations of $\GL_n(K)$, a theory that is of fundamental importance in the Langlands programme. For instance, the Cartan decomposition implies the countability of the set $\GL_n(K) / \GL_n(R)$ which guarantees that one has Schur's lemma. It is also used to show the commutativity of spherical Hecke algebras and to compute spherical vectors. Note that in these applications, the field is assumed to be complete with respect to the valuation, but for the Cartan decomposition itself this is not necessary. 

One may also ask to what extent the Cartan decomposition holds for  more general valuation rings. There one still has the decomposition 
\[
\GL_n(K) =  \GL_n(R) T(K) \GL_n(R),
\]
where as above $T(K)$ is the abelian subgroup of diagonal matrices. However in the absence of a uniformiser, there is no natural finer  decomposition.

The proof of Theorem \ref{thm:cartan-decomp} is divided into two parts. One first shows
existence of the decomposition, i.e., that one can write any element $g\in \GL_n(K)$ as $g=k_1 t k_2$, for $k_1,k_2 \in \GL_n(R)$ and some $t \in T^{-}$. Then, in a second step, one shows uniqueness of the decreasing tuple $(f_1, \dots , f_n)$ of integers that defines the element $t\in T^{-}$. This implies the disjointness of the double cosets in (\ref{cartan}).
One also shows that the tuple $(f_1, \dots , f_n)$ does not depend on the choice of the uniformiser. 

The proof of existence is an inductive linear algebra argument making use of the fact that one has a valuation to order matrix entries.
The strategy for the formalisation
is inspired by Sébastien Gouëzel's proof
\link{https://github.com/leanprover-community/mathlib4/blob/c44e0c8ee63ca166450922a373c7409c5d26b00b/Mathlib/LinearAlgebra/Matrix/Transvection.lean\#L673}{} that every matrix can be written as a product of a diagonal matrix
and transvections.

To elaborate a bit, note that the group $\GL_n(R)$ contains permutation matrices. So by multiplying a given matrix $g$ from the left and the right by elements in $\GL_n(R)$, we can swap rows and columns and move matrix entries. In particular we can normalise the matrix so that the entry in the lower right corner has maximal valuation. 

\begin{lstlisting}
lemma exists_normalization0 {n : ℕ} (g : Matrix (Fin n ⊕ Unit) (Fin n ⊕ Unit) K) :
    ∃ (k₁ k₂ : GL (Fin n ⊕ Unit) R),
      v ((k₁.val * g * k₂.val) (Sum.inr _) (Sum.inr _)) = g.coeffs_sup v := by
  /- ... -/
\end{lstlisting}

The notation \lstinline{.val} denotes the underlying matrix of an element
of \lstinline{GL (Fin n ⊕ Unit) R} and \lstinline{Fin n} denotes a type of $n$ elements. Note that we use \lstinline{Fin n ⊕ Unit} instead of \lstinline{Fin (n + 1)} to
avoid casting between \lstinline{Fin n} and \lstinline{Fin (n + 1)} when using this
in the induction step. The price we pay is that we need to write
\lstinline{Sum.inr _} for the $(n+1)$-st entry. Moreover \lstinline{g.coeffs_sup v} denotes
the supremum of the valuations \lstinline{v} of the coefficients of the matrix $g$. 

From there we can proceed to eliminate non-zero elements in the last row and column. The outcome is a block matrix in $\GL_{n-1}(K) \times \GL_1(K)$ and we can keep repeating this process to arrive at a diagonal matrix. 
\newpage
\begin{lstlisting}
lemma exists_trafo_isDiag 
  (g : Matrix (Fin n) (Fin n) K) :
  ∃ (k₁ k₂ : GL (Fin n) R),
    IsMonotoneDiag (k₁ * g * k₂) ∧
    (k * g * k₂).coeffs_sup v = g.coeffs_sup v := /- ... -/
\end{lstlisting}

The predicate \lstinline{Matrix.IsMonotoneDiag} \link{https://github.com/chrisflav/bruhat-tits/blob/b8d0ceceb5cd243b4a4c20be816d591c319e77e9/BruhatTits/Cartan/Existence.lean\#L474}{}
expresses that an $n \times n$-matrix
over $K$ is diagonal and that the valuation of the diagonal entries increases monotonically.

Up until this point, the proof works over an arbitrary valuation ring.
The additional assumption that the valuation is discrete is needed for the final step,
namely to get the diagonal matrix into the prescribed form, i.e., with entries ordered
powers of the uniformiser.

\begin{lstlisting}
theorem cartan_decomposition [IsDiscreteValuationRing R]
  (g : GL (Fin n) K) :
  ∃ (k₁ k₂ : GL (Fin n) R) 
    (f : Fin n → ℤ), Antitone f ∧ 
  k₁ * g * k₂ = cartanDiag ϖ hϖ f := /- ... -/
\end{lstlisting}

Here \lstinline{f} takes the role of the integers $f_1,\dots, f_n$ and the condition
$f_1\geq \dots \geq f_n$ is encoded by \lstinline{Antitone f}.
For a discussion of the definition of \lstinline{IsDiscreteValuationRing}
see \cite{local-fields-lean3}.\footnote{The cited article is written for Lean3 and \lstinline{IsDiscreteValuationRing} was still called \lstinline{discrete_valuation_ring}.}

To prove the uniqueness part, one reduces to showing the following: 
Let $k_1,k_2\in \GL_n(R)$ and $t, t' \in T(K)$ with entries integral powers $\varpi^{f_i}$ of the uniformiser such that $t k_1 t' = k_2 \in \GL_n(R)$. Then there exists a permutation $\sigma \in S_n$ such that $\sigma (t)= t'$, where $\sigma(t)$ is the matrix obtained from $t$ by permuting the entries \link{https://github.com/chrisflav/bruhat-tits/blob/b8d0ceceb5cd243b4a4c20be816d591c319e77e9/BruhatTits/Cartan/Uniqueness.lean\#L56C7-L56C39}{}.
For this one argues using the determinant and e.g.\ the fact that $|\det(x)|=1$ 
for any $x\in \GL_n(R)$.

\noindent In Lean, the uniqueness statement then reads as follows.

\begin{lstlisting}
theorem cartan_decomposition_unique 
  {k₁ k₂ k₁' k₂' : GL (Fin n) R}
  {f f' : Fin n → ℤ} (hf : Antitone f)
  (hf' : Antitone f')
  (h : k₁ * cartanDiag ϖ hϖ f * k₂ = k₁' * cartanDiag ϖ hϖ f' * k₂') :
  f = f' := /- ... -/
\end{lstlisting}
For completeness, we also formalise a more literal version of Theorem
\ref{thm:cartan-decomp} \link{https://github.com/chrisflav/bruhat-tits/blob/b8d0ceceb5cd243b4a4c20be816d591c319e77e9/BruhatTits/Cartan/Uniqueness.lean\#L165}{}, but when applying the Cartan decomposition, the two
formulations we give above are more useful.

In the formalisation of the proof we often have to pass between elements of $\GL_n(R)$ and elements of $\GL_n(K)$.
Since these two are distinct types, we use a \emph{coercion} to ease the transition.
\begin{lstlisting}
instance : CoeHead (GL (Fin n) R) (GL (Fin n) K) where
  coe g := GeneralLinearGroup.map R.subtype g
\end{lstlisting}
The function \lstinline{GeneralLinearGroup.map R.subtype} is the
inclusion $\GL_n(R) \to \GL_n(K)$ and registering this coercion means that Lean
automatically inserts this function when it sees an element of $\GL_n(R)$, but expects an
element of~$\GL_n(K)$.

During the formalisation, we expanded the general purpose API for (invertible) matrices
with elementary constructions such as swap matrices and valuations of coefficients.

\section{The Bruhat--Tits Tree} \label{sec:BT}
We keep the notation from the previous section so that $R$ denotes a discrete valuation ring. In this section we describe the formalisation of the Bruhat--Tits tree $\mc{T}$ in detail. 

For that let us briefly review its construction. Consider the set $\mathrm{Latt}(R)$ of $R$-lattices $L\subset K^2$, i.e., of finitely generated $R$-submodules of $K^2$ that span $K^2$ as a $K$-vector space. Two lattices $L, L'$ are called homothetic if one is a scalar multiple of the other, i.e., if there exists $\alpha \in K^{\times}$, such that $L=\alpha L'$. Homothety defines an equivalence relation on $\mathrm{Latt}(R)$ and the set of \emph{vertices of $\mc{T}$} is defined as the set of equivalence classes
\[
V(\mc{T}) := \mathrm{Latt}(R)/{\sim}.
\]
For example the \emph{standard lattice} $R^2$ gives rise to a vertex $v_0$, which we refer to as the standard vertex. 
To define the edges, one associates to each pair of vertices ($v,v')$ a natural number $d(v, v')$, their \emph{distance}. We describe this function along with the formalisation in Section \ref{sec:distance} below.
Its construction requires a normalisation result on bases of pairs of lattices and uses the Cartan decomposition.
 
The distance function allows us to define two vertices $v,v'$ as neighbours, i.e., they are connected by an edge if $d(v,v')=1$. 
An equivalent characterization is as follows: two vertices $v, v' \in \mc{T}$ are neighbours if there exist representatives $v=[L]$ and $v'=[L']$ such that 
\[
\varpi L \subsetneq L' \subsetneq L.
\]
The resulting graph is a tree, i.e., connected and acyclic, and is called the \emph{Bruhat--Tits tree}. When the residue field of $R$ is finite, the Bruhat--Tits tree is regular of degree $q+1$, where $q$ is the cardinality of the residue field. Here regular of degree $d\in \N$ means that every vertex has exactly $d$ neighbours. 

Recall that we are assuming that the field $K$ is discretely valued. As indicated in the introduction, in many applications of the Bruhat--Tits tree the field $K$ is assumed to be a local field and in particular complete (or at least, as e.g.\ in \cite{kaletha-prasad}, Henselian with perfect residue field). For the construction of the Bruhat--Tits tree itself and for further notions and results formalised in this project completeness is not needed and therefore we do not assume it anywhere.

\subsection{Lattices}
For the formalisation of the notion of lattices, let us pass to a more general setting. Let $K$ be any field, let $R\subset K$ be a subring and consider 
$R$-submodules of $K^n$. Our main definition is the predicate
when such a submodule is a lattice:

\begin{lstlisting}
class IsLattice (M : Submodule R (Fin n → K)) : Prop where
  /-- `M` is finitely generated. -/
  isFG : M.FG
  /-- `M` spans `Fin n → K` -/
  spans : Submodule.span K (M : Set (Fin n → K)) = ⊤
\end{lstlisting}

This is implemented as a type class, following the general design principle of mathlib.
While this definition is useful as it is easy to verify, in practice we want to work
with explicit descriptions of lattices via bases. While every free $R$-submodule of $K^n$
of rank $n$ is clearly a lattice the converse might not be true. But when $R$ is a principal ideal domain, every lattice is indeed free.

\begin{lstlisting}
instance IsLattice.free [IsPrincipalIdealRing R] 
    (M : Submodule R (Fin n → K)) 
    [IsLattice M] : Module.Free R M :=
  /- ... -/
\end{lstlisting}

The extensible, typeclass based setup of \lstinline{IsLattice} is well suited
for developing the general theory of lattices.
In order to construct the Bruhat--Tits tree for a given $R$ though,
a \emph{type} of vertices is required that can be equipped with a graph structure. Hence we also need a type of $R$-lattices, which we define as follows. 
\begin{lstlisting}
structure Lattice where
  M : Submodule R (Fin 2 → K)
  isLattice : IsLattice M
\end{lstlisting}
We restrict to $n=2$ mostly for simplicity, since we would have to write
\lstinline{Lattice n R} otherwise.\footnote{For planned adaptations for the integration into mathlib, we refer to Section~\ref{sec:integration}.}
Note that this is unrelated to the order-theoretic notion of a lattice,
which also carries the name \lstinline{Lattice} in mathlib
\link{https://github.com/leanprover-community/mathlib4/blob/c44e0c8ee63ca166450922a373c7409c5d26b00b/Mathlib/Order/Lattice.lean\#L464}{}.
Our \lstinline{Lattice} declaration is therefore namespaced as \lstinline{BruhatTits.Lattice},
but we drop the namespace in the following, when there is no risk of confusion.

The vertices of $\mathcal{T}$ are defined as lattices up to homothety and with the above, the type of vertices can then be defined as a quotient of the type \lstinline{Lattice}.

Note that, when building on top of this, for extensibility as much as possible should
be developed in terms of \lstinline{IsLattice} and only if needed, the constructions
should be applied to the type \lstinline{Lattice} in a last step. An example of this pattern
is the \file{Lattice/Construction.lean} file \link{https://github.com/chrisflav/bruhat-tits/blob/b8d0ceceb5cd243b4a4c20be816d591c319e77e9/BruhatTits/Lattice/Construction.lean\#L10}{}.

\subsection{Distance function}\label{sec:distance}
We return to the setting of a discrete valuation ring $R$ and from now on only consider lattices in $K^2$. An essential tool for defining the Bruhat--Tits tree as a graph and for proving that it is indeed a tree, is the construction of the above mentioned distance function on pairs of vertices. For that, a crucial ingredient is the
following proposition:

\begin{proposition}[\link{https://github.com/chrisflav/bruhat-tits/blob/b8d0ceceb5cd243b4a4c20be816d591c319e77e9/BruhatTits/Lattice/Distance.lean\#L113}{}]
    Let $M$, $L$ be $R$-lattices. Then there exists an ordered $R$-basis $(e, f)$ of $M$ and
    integers $n \ge m$ such that $(\varpi ^ n e, \varpi ^ m f)$ form an $R$-basis of $L$. The
    integers are uniquely determined by $M$ and $L$ and in particular do not depend on
    the $R$-basis of $M$.
    \label{prop:inv-factor}
\end{proposition}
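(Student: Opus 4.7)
The plan is to reduce the statement to the Cartan decomposition of $\GL_2(K)$ (Theorem~\ref{thm:cartan-decomp}), exploiting the fact that $R$ is a DVR, hence a PID, so that every lattice is free of rank $2$ over $R$ by \lstinline{IsLattice.free} and thus admits an $R$-basis which is simultaneously a $K$-basis of $K^2$.

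\textbf{Existence.} First I would fix arbitrary $R$-bases $(e_0, f_0)$ of $M$ and $(e'_0, f'_0)$ of $L$. Both are $K$-bases of $K^2$, so the change-of-basis matrix expressing $(e'_0, f'_0)$ in terms of $(e_0, f_0)$ is an element $g \in \GL_2(K)$. Applying Theorem~\ref{thm:cartan-decomp} produces a factorisation $g = k_1 \cdot \mathrm{diag}(\varpi^n, \varpi^m) \cdot k_2$ with $k_1, k_2 \in \GL_2(R)$ and $n \geq m$. Let $(e, f)$ be the basis of $M$ obtained by applying $k_1$ to $(e_0, f_0)$; this is a genuine $R$-basis of $M$ since $k_1 \in \GL_2(R)$. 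Then
\[
(\varpi^n e, \varpi^m f) = (e, f) \cdot \mathrm{diag}(\varpi^n, \varpi^m) = (e'_0, f'_0) \cdot k_2^{-1},
\]
which is a basis of $L$ because $k_2^{-1} \in \GL_2(R)$.

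\textbf{Uniqueness.} Suppose two data $(e, f, n, m)$ and $(e', f', n', m')$ both satisfy the conclusion with $n \geq m$ and $n' \geq m'$. Since $(e, f)$ and $(e', f')$ are both $R$-bases of $M$, they differ by some $k \in \GL_2(R)$; similarly the corresponding $R$-bases of $L$ differ by some $k' \in \GL_2(R)$. Writing out both relations yields the matrix equation
\[
k \cdot \mathrm{diag}(\varpi^{n'}, \varpi^{m'}) = \mathrm{diag}(\varpi^n, \varpi^m) \cdot k',
\]
and the already-formalised \lstinline{cartan_decomposition_unique}, applied to the two antitone tuples, forces $(n, m) = (n', m')$.

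The main obstacle I anticipate is not mathematical but the glue code needed to move between the submodule-theoretic world of \lstinline{Lattice} and the matrix world in which Cartan has been established. Concretely, one has to package each choice of basis as an $R$-linear equivalence $R^2 \to M$, extend scalars to $K$ to obtain a matrix in $\GL_2(K)$, and after invoking Cartan translate the factor $k_1 \in \GL_2(R)$ back into an honest new basis of $M$ and verify that the scaled pair genuinely generates $L$ rather than merely lying in it. The bookkeeping in the uniqueness step---extracting the displayed matrix identity from two abstract normalisations---is likely the most tedious piece.
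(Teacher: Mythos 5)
Your proposal is essentially the paper's own argument: fix arbitrary $R$-bases of $M$ and $L$, view the change-of-basis matrix as an element of $\GL_2(K)$, apply the Cartan decomposition, and use the $\GL_2(R)$-factors to modify the bases of $M$ and $L$ respectively, with uniqueness deduced from \lstinline{cartan_decomposition_unique}. Both the existence and uniqueness steps, including the displayed matrix identities, match the paper's proof.
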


\noindent In Lean the existence part of this proposition reads as:

\begin{lstlisting}
lemma exists_normal_basis (M L : Lattice R) :
    ∃ (bM : Basis (Fin 2) R M.M) (bL : Basis (Fin 2) R L.M) (f : Fin 2 → ℤ),
    Antitone f ∧ 
    ∀ i, (bL i).val = (ϖ ^ f i : K) • (bM i).val
\end{lstlisting}
Here, for a term \lstinline{x : M}\,, the notation \lstinline{x.val} denotes the underlying element
of $K ^ 2$. This is an invocation of the inclusion map $M \to K ^ 2$.
\begin{proof}[Proof of Proposition \ref{prop:inv-factor}]
We apply the Cartan decomposition: First note
that for a lattice $M$ the group $\GL_2(R)$ acts on the set of bases of $M$ on the right
\link{https://github.com/chrisflav/bruhat-tits/blob/b8d0ceceb5cd243b4a4c20be816d591c319e77e9/BruhatTits/Lattice/Distance.lean\#L51}{}.
Now let $(e, f)$ and $(e', f')$ be arbitrary $R$-bases of $M$ and $L$ respectively. Interpreting
these as the columns of elements $g$ and $h$ of $\GL_2(K)$, we may apply the Cartan decomposition
to the product $g^{-1}h$ to obtain a factorization $g^{-1}h = k_1 d k_2$ where
$d = diag(\varpi ^n, \varpi ^ m)$ with $n \ge m$. Then $(e, f) \cdot k_1$ and
$(e', f') \cdot k_2 ^ {-1}$ work with the pair $(n, m)$.
\end{proof}

We can now define the \emph{distance} of two lattices $M$ and $L$, denoted by $d(M, L)$
as the difference $n - m$ of the unique pair of integers from Proposition \ref{prop:inv-factor}. One then easily checks
that $d(M, L) = d(L, M)$, $d(M, L) = 0$ if and only if $M = L$ and that~$d$ is invariant under
scalar multiplication by $K^\times$. 
In particular, $d$ also induces a well-defined
function on the vertices (called \lstinline{inv} in the formalisation)
and we say two vertices $x$ and $y$ are \emph{neighbours} if
$d(x, y) = 1$.

By the properties of the distance, we immediately obtain that this endows $V(\mathcal{T})$ with
the structure of a simple graph, which in Lean reads as follows:
\begin{lstlisting}
def BTgraph : SimpleGraph (Vertices R) where
  Adj L M := BruhatTits.IsNeighbour L M
  symm L M := /- ... -/
  loopless L := /- ... -/
\end{lstlisting}
A priori, $\mathcal{T}$ now has two notions of distance: The one defined above and the
graph-theoretic one, given by infima over lengths of paths. These two notions agree, as we show in Section \ref{sec:acyclicity}.

In order to show that \lstinline{BTgraph} is indeed a tree, we need to show that it
is connected and acyclic. Connectedness is straightforwardly proven by
induction on the distance \linebreak (\lstinline{BTgraph_connected} \link{https://github.com/chrisflav/bruhat-tits/blob/b8d0ceceb5cd243b4a4c20be816d591c319e77e9/BruhatTits/Graph/Graph.lean\#L61}{}).
Acyclicity is more involved and we explain the proof in Section \ref{sec:acyclicity}.

\subsection{Perspectives on lattices}

When working with the Bruhat--Tits graph in practice and while showing it is a tree, we rely heavily on
the fact that any lattice over a PID is free. In particular, many proofs start by choosing appropriate
bases for the lattices in play. How we represent such a basis in Lean, depends on our perspective on lattices.

Restricting to the rank $2$ case, we can view a lattice as

\begin{enumerate}
    \item a free $R$-submodule of $K^2$ of rank $2$,
    \item the $R$-span of a $K$-basis of $K^2$, or
    \item the $R$-span of the columns of an element of $\GL_2(K)$ (as above in the proof of Proposition \ref{prop:inv-factor}).
\end{enumerate}

While informally we freely switch between these perspectives, formally all of these are different and this difference shows most clearly when multiple lattices are in play. We focus on comparing the first two
viewpoints.

For the comparison, we consider a variant of Proposition~\ref{prop:inv-factor}, which
is essentially an unfolding of the definition of~\lstinline{dist}:
\begin{lemma}
Let $M$, $L$ be lattices. Then there exists an $R$-basis $(e, f)$ of $M$ and integers $n \ge m$ such that
$(\varpi ^ n e, \varpi ^ m f)$ is an $R$-basis of $L$ and $n - m = d(M, L)$.
\label{lemma:dist-aux}
\end{lemma}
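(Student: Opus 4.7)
The plan is to derive this lemma as essentially a restatement of Proposition \ref{prop:inv-factor} together with the definition of the distance function. Recall that $d(M, L)$ was defined as $n - m$, where $(n, m)$ is the unique pair of integers guaranteed by Proposition \ref{prop:inv-factor}. So the content of Lemma \ref{lemma:dist-aux} is really just unpacking this definition.

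Concretely, I would first invoke Proposition \ref{prop:inv-factor} directly on the pair $(M, L)$, obtaining an $R$-basis $(e, f)$ of $M$, integers $n \ge m$, and the fact that $(\varpi^n e, \varpi^m f)$ is an $R$-basis of $L$. The only remaining claim is the equality $n - m = d(M, L)$. By the uniqueness clause of Proposition \ref{prop:inv-factor}, the pair $(n, m)$ does not depend on the choice of basis of $M$, and coincides with the pair used in the definition of $d(M, L)$. Therefore $n - m = d(M, L)$ holds by definition.

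In the formalisation, the only subtlety I expect is the interface between \lstinline{exists_normal_basis} and whatever wrapper is used to define \lstinline{dist} (and its descent \lstinline{inv} to \lstinline{Vertices R}). Depending on how \lstinline{dist} is set up in Lean — for instance whether it is defined via \lstinline{Classical.choose} applied to \lstinline{exists_normal_basis} together with the uniqueness lemma, or as the difference of two separate integer-valued invariants — one might need to massage the goal via the appropriate \lstinline{dist_eq}-style equation. The mathematical content is trivial once Proposition \ref{prop:inv-factor} is in hand; the formal work is bookkeeping around the definitional packaging.

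The main obstacle, therefore, is not mathematical but rather ensuring that the Lean term produced by \lstinline{exists_normal_basis} aligns definitionally (or via a short rewrite) with the integers recorded inside \lstinline{dist M L}. If \lstinline{dist} is defined through \lstinline{Classical.choose}, the cleanest approach is to use the uniqueness statement to transport the pair returned by \lstinline{exists_normal_basis} to the chosen pair, and then conclude by \lstinline{rfl} or a trivial arithmetic rewrite.
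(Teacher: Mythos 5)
Your proposal matches the paper's own treatment: the paper explicitly describes Lemma~\ref{lemma:dist-aux} as ``essentially an unfolding of the definition of \lstinline{dist}'', which in turn is defined as the difference $n-m$ of the unique pair from Proposition~\ref{prop:inv-factor}. Invoking the existence part of Proposition~\ref{prop:inv-factor} and identifying the resulting pair with the one recorded in \lstinline{dist} via uniqueness is exactly the intended argument, and your remarks about the formal bookkeeping around \lstinline{Classical.choose} are on point.
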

This is written from the first perspective, as we work with $R$-bases of $M$ and $L$. One
possible formalisation of this statement is the following:

\begin{lstlisting}
lemma exists_repr_dist (M L : Lattice R) :
  ∃ (bM : Basis (Fin 2) R M) (bL : Basis (Fin 2) R L) (f : Fin 2 → ℤ),
    Antitone f ∧
    (∀ i, (bL i).val = (ϖ ^ f i : K) • (bM i).val) ∧
    f 0 - f 1 = dist M L :=
  /- ... -/
\end{lstlisting}

What corresponds to the statement
\emph{$(\varpi ^ n e , \varpi ^ m f)$ is an $R$-basis of $L$} in \ref{lemma:dist-aux}
is expressed in the formal version as
\emph{there exists a basis $(e', f')$ of $L$ such that $e' = \varpi ^ n e$ and $f' = \varpi ^ m f$}. Note though,
that the last two equalities do not type-check.
Formally, the elements \lstinline{ϖ ^ n • e} and \lstinline{ϖ ^ n • f} are of type
\lstinline{M}, while \lstinline{e'} and \lstinline{f'} are of type \lstinline{L}\,.
Hence to state these equalities in Lean, we have to compare
\lstinline{ϖ ^ n • e} and~\lstinline{e'} inside $K^2$, which explains the appearance of
\lstinline{.val} and the explicit type annotation in the above snippet.
In particular this means that we cannot use the induction principle of equality
on these equalities.

In order to avoid casting elements of $M$ into elements of~$K^2$ via \lstinline{.val}
while still keeping the first perspective, one can introduce a predicate
\lstinline{IsBasisOn}, which expresses that
a given indexed family $(b_i)_{i \in I}$ of elements of a vector space $V$ is a basis
for the $R$-submodule $M$, i.e., it is linearly independent over $R$ and generates $M$
as an $R$-module.
Concretely, this could be implemented as \link{https://github.com/chrisflav/bruhat-tits/blob/b8d0ceceb5cd243b4a4c20be816d591c319e77e9/BruhatTits/Utils/IsBasisOn.lean\#L32}{}:

\begin{lstlisting}
class IsBasisOn {R M : Type*} [Ring R] [AddCommGroup V] [Module R V]
    (b : ι → V) (M : Submodule R V) : Prop where
  linearIndependent : LinearIndependent R b
  span_range_eq : Submodule.span R (Set.range b) = M
\end{lstlisting}

Then instead of working with terms of type \lstinline{Basis ι R M}, we only work with families
\lstinline{(b : ι → V) (M : Submodule R V) [IsBasisOn b M]}. 

Our above lemma could 
then be formulated as
\newpage
\begin{lstlisting}
lemma exists_repr_dist₂ (M L : Lattice R) :
  ∃ (b : Fin 2 → (Fin 2 → K))
    [IsBasisOn b M] (f : Fin 2 → ℤ),
    Antitone f ∧
    IsBasisOn (fun i ↦(ϖ ^ f i : K) • b i) L ∧
    f 0 - f 1 = dist M L := /- ... -/
\end{lstlisting}
The \lstinline{.val} has disappeared, since for \lstinline{i : Fin 2}\,, both \lstinline{b i} and
\lstinline{(ϖ ^ f i : K) • b i} are terms of type \lstinline{Fin 2 → K}\,. As a side effect,
the statement is very close to the informal formulation of Lemma \ref{lemma:dist-aux}.

Let us now give a formal statement of Lemma \ref{lemma:dist-aux} from the second perspective. This
means we don't start from $R$-submodules of $K^2$ and their $R$-bases, but instead from $K$-bases of $K^2$ and consider the submodules they span. The key definition is
the following, which associates to a $K$-basis $(b_0, b_1)$ of $K^2$ the $R$-lattice
spanned over $R$ by $(b_0, b_1)$.

\begin{lstlisting}
def Basis.toLattice (b : Basis (Fin 2) K (Fin 2 → K)) : Lattice R where
  M := Submodule.span R (Set.range b)
  isLattice := /- ... -/
\end{lstlisting}

Since in our context, every lattice is free and the underlying elements of an $R$-basis
of a lattice form a $K$-basis of $K^2$, the function \lstinline{Basis.toLattice}
has a section, assigning an arbitrarily chosen basis to a lattice $M$ (\link{https://github.com/chrisflav/bruhat-tits/blob/b8d0ceceb5cd243b4a4c20be816d591c319e77e9/BruhatTits/Lattice/Basic.lean\#L360}{} and
\link{https://github.com/chrisflav/bruhat-tits/blob/b8d0ceceb5cd243b4a4c20be816d591c319e77e9/BruhatTits/Lattice/Construction.lean\#L320}{}).
In particular,
every lattice is of the form \lstinline{Basis.toLattice b} for some basis $b$.

To formulate Lemma \ref{lemma:dist-aux} in this language, it remains to add the twist of a basis by
a tuple of integers: Given a basis $b = (b_0, b_1)$
the twist
\lstinline{b.twist f} by a pair of integers
$f = (f_0, f_1)$ is the basis $(\varpi ^ {f_0} b_0, \varpi ^ {f_1} b_1)$.
With these definitions, the third version of our lemma is:

\begin{lstlisting}
lemma exists_repr_dist₃ (M L : Lattice R) :
  ∃ (b : Basis (Fin 2) K (Fin 2 → K)) (f : Fin 2 → ℤ),
    Antitone f ∧
    M = b.toLattice ∧
    L = (b.twist f).toLattice ∧
    f 0 - f 1 = dist M L := /- ... -/
\end{lstlisting}

Note that instead of comparing two bases (or indexed families), we have two equalities of
lattices: \lstinline{M = b.toLattice} and \lstinline{L = (b.twist f).toLattice}. As
in the second formulation from the first perspective, this eliminates the need for invocations
of \lstinline{.val} and allows to use the induction principle of equality. As an example
of the application of Lemma \ref{lemma:dist-aux} in this formulation, consider the proof that
the distance function is invariant under the action of $\GL_2(K)$:
\begin{lstlisting}
lemma dist_smul_GL_eq_dist (M L : Lattice R) (g : GL (Fin 2) K) :
    dist (g • M) (g • L) = dist M L := by
  obtain ⟨ϖ, hϖ, b, f, hf, hM, hL, hdiff⟩ := exists_repr_dist₃ M L
  subst hM hL
  rw [← Basis.smulGL_toLattice, ← Basis.smulGL_toLattice, Basis.smulGL_twist]
  /- ... -/
\end{lstlisting}

Here in the first line, we invoke \lstinline{exists_repr_dist₃} to obtain
equalities \lstinline{hM} and \lstinline{hL} on which we apply the \lstinline{subst}
tactic to replace all occurrences of \lstinline{M} (resp. \lstinline{L}) by
\lstinline{g • b.toLattice} (resp. \lstinline{g • (b.twist f).toLattice}). As the name suggests, the
\lstinline{subst} tactic is Lean's equivalent of performing substitutions. This is done by applying the induction principle of equality. Then we can use rewrite lemmas to interchange the action of~$\GL_2(K)$ with
various operations on bases and lattices.%
\footnote{See \lstinline{dist_smul_GL_eq_dist}\link{https://github.com/chrisflav/bruhat-tits/blob/b8d0ceceb5cd243b4a4c20be816d591c319e77e9/BruhatTits/Graph/Vertices.lean\#L164}{} for the full proof.}

What makes the second perspective work well is that we may assume that every lattice
is of the form \lstinline{b.toLattice} for some basis \lstinline{b}. To then
do calculations with lattices, we make use of API lemmas like the ones used in the proof of
\lstinline{dist_smul_GL_eq_dist} of which the following is an example,
\begin{lstlisting}
lemma smulGL_toLattice (g : GL (Fin 2) K) (b : Basis (Fin 2) K (Fin 2 → K)) :
    (g • b).toLattice = g • b.toLattice := /- ... -/
\end{lstlisting}
It says that the actions of $\GL_2(K)$ on bases of $K^2$ and on $R$-lattices commute with
the induced lattice construction.

The common pattern in the variants \lstinline{exists_repr_dist₂} and
\lstinline{exists_repr_dist₃} is the avoidance of calculating
in types that depend on the specific lattices in play (such as
\lstinline{Basis (Fin 2) R M} and \lstinline{M} itself). Instead the types
of all data carrying terms only depend on \lstinline{R} and \lstinline{K} and
this is what enables the induction principle of equality.

The difference of the two variants is that the former uses equalities in \lstinline{Fin 2 → K}, while the
latter uses equalities in \lstinline{Lattice R}. Since in the context of this project
we work with vertices of the tree, equality of lattices is typically more useful.
This is why the latter approach is what is now broadly used in the project.

\subsection{Acyclicity}

\label{sec:acyclicity}

The reason why showing connectedness is straightforward is that it only involves
two vertices, which we can immediately bring into a compatible normal form using the lemma
\lstinline{exists_repr_dist₃}. The same trick does not work, when three or more
lattices are in play. However, in good enough cases we may transform into a normal form,
using the natural $\GL_2(K)$-action on $\mathcal{T}$.

By normal form we mean the following: Let $(v_0, \ldots, v_n)$ be a walk in $\mathcal{T}$, i.e., a sequence of adjacent vertices $v_i$. Following \cite[Section 4]{casselman}, we call a
walk $(v_0, \ldots, v_n)$ a \emph{standard chain} if there exists a basis $(e, f)$ of a representative of $v_0$ such that
$(\varpi ^ k e, f)$ is a basis of a representative of $v_k$ for every $0 \le k \le n$.
Note that by Proposition \ref{prop:inv-factor}, any walk of length one is a standard chain.
By construction of the distance, if $(v_0, \ldots, v_n)$ is a standard chain,
then $d(v_0, v_n) = n$.

\noindent The key tool now is:

\begin{lemma}[\link{https://github.com/chrisflav/bruhat-tits/blob/b8d0ceceb5cd243b4a4c20be816d591c319e77e9/BruhatTits/Graph/Tree.lean\#L163}{}]
    Let $p$ be a trail in $\mathcal{T}$, i.e. a sequence of adjacent vertices
    $p = (v_0, \ldots, v_n)$ without repeating edges. Then there exists
    $g \in \GL_2(K)$ such that $(g v_0, \ldots, g v_n)$ is a standard chain.
    \label{lemma:exists-trafo-standard}
\end{lemma}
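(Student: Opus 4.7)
My approach is induction on the length $n$ of the trail. The base cases $n = 0$ and $n = 1$ are either trivial or an immediate consequence of Proposition~\ref{prop:inv-factor} (using that the homothety class of a lattice only sees the \emph{difference} of the two integers produced), so $g$ may be taken to be the identity.

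For the inductive step, consider a trail $(v_0, \ldots, v_n, v_{n+1})$ of length $n+1 \ge 2$. Applying the induction hypothesis to the prefix $(v_0, \ldots, v_n)$ yields some $g \in \GL_2(K)$; replacing each $v_k$ by $g v_k$, we may assume $(v_0, \ldots, v_n)$ is already a standard chain, witnessed by a basis $(e, f)$ of a representative $L_0$ of $v_0$, with $L_k := \langle \varpi^k e, f\rangle$ a representative of $v_k$. The trail condition forces $v_{n+1} \ne v_{n-1}$, since otherwise the edges $\{v_{n-1}, v_n\}$ and $\{v_n, v_{n+1}\}$ would coincide. Now the neighbours of $v_n$ correspond bijectively to lines in the $2$-dimensional residue-field vector space $L_n / \varpi L_n$, which has basis $\overline{\varpi^n e}, \bar f$; a short computation identifies $v_{n-1}$ with the line generated by $\overline{\varpi^n e}$. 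Hence the line corresponding to $v_{n+1}$ has the form $\overline{f + a \varpi^n e}$ for some $a \in R$. Setting $f' := f + a\varpi^n e$, I would check that: (i) $(e, f')$ is still an $R$-basis of $L_0$, because the change-of-basis matrix is unipotent; (ii) $\langle \varpi^k e, f'\rangle = L_k$ for all $k \le n$, since $a\varpi^n e \in \langle \varpi^k e\rangle$; and (iii) $\langle \varpi^{n+1} e, f'\rangle$ is a representative of $v_{n+1}$, by construction of $a$. This exhibits the whole walk as a standard chain with basis $(e, f')$, so the same $g$ still works.

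The main obstacle I anticipate is the careful bookkeeping in the line analysis inside $L_n/\varpi L_n$: identifying the $v_{n-1}$-line, passing from the inequality $v_{n+1} \ne v_{n-1}$ to the existence of an $R$-lift $a$, and verifying on the nose that the reassembled lattices represent the correct vertices. I expect the \lstinline{b.toLattice}-perspective from Section~\ref{sec:BT} to be essential here, as it phrases standard-chain conditions as equalities of lattices and so unlocks the \lstinline{subst} tactic instead of forcing calculations inside \lstinline{Basis (Fin 2) R M}. A welcome byproduct of the proof is that a standard chain automatically has pairwise distinct vertices (their distances from $v_0$ are $0, 1, \ldots, n$), so the lemma in fact upgrades any trail in $\mathcal{T}$ to a path, which is precisely the input required for the acyclicity argument that follows.
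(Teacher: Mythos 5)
Your proposal is correct and matches the paper's strategy, which follows Casselman's Proposition 4.1: induction on the trail length, normalising the prefix to a standard chain, and then extending one step by identifying the line in $L_n/\varpi L_n$ corresponding to $v_{n+1}$ and perturbing the second basis vector $f \mapsto f' = f + a\varpi^n e$ so that the unipotent change of basis preserves all prior $L_k$ while the new lattice $\langle \varpi^{n+1} e, f'\rangle$ lands in the required homothety class. Your anticipated concerns — the bookkeeping in $L_n/\varpi L_n$ and the usefulness of the lattice-equality perspective — are exactly where the formal effort concentrates, consistent with the paper's remark that this is the slowest file in the repository.
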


The proof of Lemma \ref{lemma:exists-trafo-standard} follows closely \cite[Proposition 4.1]{casselman}
and is fairly technical as it involves going back and forth between submodules of the quotient $L/\varpi L$ and submodules of $K^2$ sitting between $\varpi L$ and $L$.
The formalisation of the proof is contained in the file \file{Lattice/Quotient.lean} \link{https://github.com/chrisflav/bruhat-tits/blob/b8d0ceceb5cd243b4a4c20be816d591c319e77e9/BruhatTits/Lattice/Quotient.lean\#L11}{}. This is the slowest of our files but we expect the performance to improve when replacing subrings by algebras (see Section \ref{sec:subrings}). 

Since $\GL_2(K)$ acts by graph isomorphisms on $\mathcal{T}$,  Lemma~\ref{lemma:exists-trafo-standard} implies that the length of
any trail in $\mathcal{T}$ agrees with the distance of its endpoints. So indeed 
our distance function and the graph theoretical distance agree \link{https://github.com/chrisflav/bruhat-tits/blob/b8d0ceceb5cd243b4a4c20be816d591c319e77e9/BruhatTits/Graph/Tree.lean\#L192}{}.

\noindent Moreover, in the special case of distance zero, any trail starting and ending at the same point has length zero, so $\mathcal{T}$ is acyclic.
Putting all the pieces together yields:

\begin{lstlisting}
theorem BTtree : SimpleGraph.IsTree (BTgraph R) where
  isConnected := BTconnected
  IsAcyclic := BTacyclic
\end{lstlisting} 

\subsection{Regularity}
We briefly comment on regularity. Let $\mathfrak{m}$ denote the maximal ideal of $R$ and let $k$ denote the residue field $R / \mathfrak{m}$. For every
lattice $L$, the neighbours of the vertex associated to $L$ are in one-to-one
correspondence to one dimensional subspaces of the two-dimensional $k$-vector space
$L / \mathfrak{m} L$, i.e., to the projectivization $\mathbb{P}_k(L / \mathfrak{m} L)$ \link{https://github.com/chrisflav/bruhat-tits/blob/b8d0ceceb5cd243b4a4c20be816d591c319e77e9/BruhatTits/Graph/Regular.lean\#L257}{}.

To see this it is convenient to introduce the notion of standard neighbours%
\footnote{In fact we use this notion already for the proof of Lemma \ref{lemma:exists-trafo-standard}, and it is introduced in the file \emph{Graph/Edges.lean}
\link{https://github.com/chrisflav/bruhat-tits/blob/b8d0ceceb5cd243b4a4c20be816d591c319e77e9/BruhatTits/Graph/Edges.lean\#L12}{}.}: We say
a lattice $M$ is a \emph{standard neighbour} of the lattice $L$ if $\varpi L \subsetneq M \subsetneq L$. Note that this condition is independent of the choice of $\varpi$,
since $\varpi L = \mathfrak{m} L$.
\begin{lstlisting}
structure IsStandardNeighbour (M L : Lattice R) : Prop where
  lt : M.M < L.M
  ϖlt : ∀ (ϖ : R), Irreducible ϖ → ϖ • L.M < M.M
\end{lstlisting}

To avoid a dependence on an additional parameter \lstinline{ϖ}
in \lstinline{IsStandardNeighbour}, we
have the condition $\varpi L \subsetneq M$ for all uniformisers $\varpi$.

If $M$ and $L$ are standard neighbours, the associated vertices of $M$ and $L$ are
neighbours \link{https://github.com/chrisflav/bruhat-tits/blob/b8d0ceceb5cd243b4a4c20be816d591c319e77e9/BruhatTits/Graph/Edges.lean\#L134}{} and conversely,
if a vertex $v$ is a neighbour of the associated vertex of $L$, there exists an unique representative $M$ of $v$ such that $M$ is a standard neighbour of~$L$.
Hence, the neighbours of the vertex associated to $M$ are in one-to-one correspondence to standard neighbours of~$M$ \link{https://github.com/chrisflav/bruhat-tits/blob/b8d0ceceb5cd243b4a4c20be816d591c319e77e9/BruhatTits/Graph/Regular.lean\#L108}{}. But
the latter are exactly the one-dimensional subspaces of~$L / \varpi L = L / \mathfrak{m} L$.

In particular, if $k$ is finite of cardinality $q$, the Bruhat--Tits tree is regular of
degree $\# (\mathbb{P}_k(L / \mathfrak{m} L)) = q + 1$ \link{https://github.com/chrisflav/bruhat-tits/blob/b8d0ceceb5cd243b4a4c20be816d591c319e77e9/BruhatTits/Graph/Regular.lean\#L286}{}.

\subsection{Group actions}\label{subsec:actions}
An important feature of the Bruhat--Tits tree is that it comes with a well-understood
transitive action of $\GL_2(K)$. To formalise this result we first have to introduce
the notion of a group action on a graph. We define this as an action on vertices that
preserves the adjacency relation. 

\begin{lstlisting}
class GraphAction (G : Type*) [Group G] [MulAction G V] (X : SimpleGraph V) where
  smul_adj_smul (g : G) (x y : V) 
  (h : X.Adj x y) : X.Adj (g • x) (g • y)
\end{lstlisting}

Now the group $\GL_2(K)$ acts on lattices in a way that preserves similarity and distance.
As a consequence we get a graph action on $\mathcal{T}$. 
\begin{lstlisting}
instance : GraphAction (GL (Fin 2) K) (BTgraph R) where
  smul_adj_smul g x y := (adj_smul_smul_iff_adj g x y).mpr
\end{lstlisting}
This action is transitive \link{https://github.com/chrisflav/bruhat-tits/blob/b8d0ceceb5cd243b4a4c20be816d591c319e77e9/BruhatTits/Graph/GroupAction.lean\#L88}{}. The stabilizer of the standard vertex is $K^\times \GL_2(R)$ \link{https://github.com/chrisflav/bruhat-tits/blob/b8d0ceceb5cd243b4a4c20be816d591c319e77e9/BruhatTits/Graph/GroupAction.lean\#L233}{}. Let $(e_0,e_1)$ denote the standard basis of the standard lattice $R^2$ and consider the lattice $L_1$ with basis $(e_0,\varpi e_1)$. Then $v_0=[R^2]$ and $v_1:=[L_1]$ are neighbours and the pointwise stabilizer of the edge $e$ connecting $v_0$ to $v_1$ is given by $K^\times I_1$, where $I_1$ is the subgroup of $\GL_2(R)$ that is upper triangular modulo $\varpi$ \link{https://github.com/chrisflav/bruhat-tits/blob/1d61dd553feb2d1ed35c249298304b818765779c/BruhatTits/Graph/GroupAction.lean\#L491}{}.

For the application in Section \ref{sec:application}, we need to use the action of $\SL_2(K)$
on $\mathcal{T}$, which just acts as a subgroup of $\GL_2(K)$. Contrary to the case of
$\GL_2(K)$ this action is no longer transitive. One way to see this is to note that
$\mathcal{T}$ admits a natural orientation coming from a partition of the vertices into even
and odd; which can be interpreted as a direction on the edges. 
A vertex is called even (resp.\ odd) if the determinant of the matrix spanned by a basis has even (resp.\ odd) additive valuation. 
\begin{lstlisting}
def IsEven (L : Lattice R) : Prop := Even L.valuation
\end{lstlisting}
Equivalently $v$ is even (resp.\ odd) if its distance from the standard vertex $v_0$ is even (resp.\ odd).
The action of $\SL_2(K)$ preserves this orientation, in the sense that it respects even and odd vertices.
\begin{lstlisting}
lemma isEven_specialLinearGroup_smul_iff {x : Vertices R}
  (g : SL (Fin 2) K) : IsEven (g • x) ↔ IsEven x := /- ... -/
\end{lstlisting}

\subsection{Subrings or algebras}\label{sec:subrings}

Informally, when thinking about a domain $R$ and its fraction field $K$, we view $R$ as a
subring of $K$. Alternatively, we may view $K$ as an $R$-algebra, where the induced
ring homomorphism $R \to K$ is injective. In Lean, introducing a variable $R$ that is a subring of an
already declared field \lstinline{K} is as simple as writing \lstinline{(R : Subring K)}. In contrast,
the second point of view is spelled as
\lstinline{(R : Type) [CommRing R] [Algebra R K] [FaithfulSMul R K]}. The assumption
\lstinline{FaithfulSMul R K} expresses that the canonical ring homomorphism $R \to K$ is injective.

When starting the project, we chose the naive \lstinline{Subring} approach, mostly for two reasons:
The first is simplicity. Not only does the initial introduction of \lstinline{R}
require only one variable
instead of four, also every $K$-module is automatically an $R$-module. This removes the need
for additional
\lstinline{Module R V} and \lstinline{IsScalarTower R K V} assumptions, that obfuscate definition and
theorem statements.\footnote{Compare the upstreamed definition
\lstinline{Submodule.IsLattice}
\link{https://github.com/leanprover-community/mathlib4/blob/c44e0c8ee63ca166450922a373c7409c5d26b00b/Mathlib/Algebra/Module/Lattice.lean\#L62}{} with our original one \link{https://github.com/chrisflav/bruhat-tits/blob/b8d0ceceb5cd243b4a4c20be816d591c319e77e9/BruhatTits/Lattice/Basic.lean\#L29}{}.}
Secondly, the field $K$ is carried in the type of $R$, which means the type
\lstinline{IsLattice M} is unambiguous. In the algebra setting, we need an additional
type parameter for $K$.
Having the type of $K$ in $M$ also means that we may register instances such as \lstinline{Module.Finite R M} for
any submodule satisfying \lstinline{IsLattice M}.
\footnote{In the \lstinline{Algebra R K} approach, there are technical ways to still
register this instance using an \lstinline{outParam}
\link{https://lean-lang.org/doc/reference/latest//Type-Classes/Instance-Synthesis/\#outParam}{} for \lstinline{K}.}

One disadvantage of the naive approach is the loss of modularity. While going from
\lstinline{Subring K} to \lstinline{Algebra R K} is automatic by typeclass inference,
results formulated using \lstinline{Subring K} can not be easily used when starting from
\lstinline{Algebra R K}. This issue appears less though, when only one field and one subring
are in play, which is the case in this project. Another disadvantage of
\lstinline{Subring K} is the worse performance, which is related to slow
typeclass synthesization due to more information in the type.\footnote{An example of this effect can be seen in this pull request
\link{https://github.com/leanprover-community/mathlib4/pull/12386}{} to mathlib.}

We eventually chose to use the first approach for the sake of a simpler presentation, but plan
to adapt it to the \lstinline{Algebra R K} setting for integration into mathlib.
 
\section{Harmonic cochains}\label{sec:application}
We apply our formalisation of the Bruhat--Tits tree to verify a result involving harmonic cochains. These form an important tool in the theory of automorphic forms over function fields and in Section \ref{subsec:FOR} below, we give more context as to why we chose this application. 

To introduce harmonic cochains recall that $V(\mc{T})$ denotes the set of vertices of the Bruhat--Tits tree $\mc{T}$. We denote the edges of $\mc{T}$ by $E(\mc{T})$.  By assumption an edge $e \in E(\mathcal{T})$ is an unordered pair $\{v, w\}$ of adjacent vertices (or neighbours, as we called them above) $v, w \in V(\mathcal{T})$. For $v \in V(\mathcal{T})$ and $e \in E(\mathcal{T})$ we write $v \in e$ when $v$ is a vertex of the edge $e$.

Recall from Section \ref{subsec:actions}, that we have a partition of the vertices into even and odd ones. Consider the map 
$w_{\mathrm{par}}: V(\mc{T}) \rightarrow \{{\pm 1}\}$ that sends even vertices to $+1$ and odd vertices to $-1$.

Let $A$ be a commutative ring (with $1$) 
and $M$ be an $A$-module. For any set $X$, we let $\mathrm{Maps}(X,M)$ be the $A$-module of maps from $X$ to $M$, with pointwise addition and scalar multiplication.

\begin{definition}
\begin{enumerate}

\item Define the \emph{Bruhat--Tits Laplacian $\Delta$} as the $A$-module homomorphism
    \begin{align*}
        \Delta: \mathrm{Maps}(E(\mc{T}),M) &\to \mathrm{Maps}(V(\mc{T}),M)\\
        f & \mapsto \left(v\mapsto w_{\mathrm{par}}(v) \sum_{\substack{e \in E(\mc{T})\\ v \in e}} f(e) \right).
    \end{align*}
\item Define the $A$-module of $M$-valued \emph{harmonic cochains} as the kernel of $\Delta$ and denote it by $\mathrm{Har}(\mc{T}, M)$. 
\end{enumerate}
\end{definition}

In number theoretic applications the setting of $A=M=\Z$ and $K$ a local field is of particular interest. As the action of~$\mathrm{SL}_2(K)$ on $V(\mc{T})$ respects the parity of the vertices, the map $\Delta$ is $\mathrm{SL}_2(K)$-equivariant. As we will show below, it is also surjective.

Therefore, we have a short exact sequence of $\mathrm{SL}_2(K)$-equivariant maps
\begin{equation}\label{SES}
0 \rightarrow \mathrm{Har}(\mc{T}, \mathbb{Z}) \rightarrow \mathrm{Maps}(E(\mc{T}),\Z) \xrightarrow{\Delta} \mathrm{Maps}(V(\mc{T}),\Z) \rightarrow 0.
\end{equation}

This short exact sequence is well known (see, for example, \cite[Proof of Prop.\ 1.7]{vdPfonctionstheta}). 
In forthcoming work of one of us (J.L.) with Gebhard Böckle and O\u{g}uz Gezm\.{i}\c{s} we explore it in the context of function field automorphic forms. There the sequence serves as a starting point and a key tool to understand the size and structure of the cohomology group $H^1(\Gamma, \mathrm{Har}(\mc{T}, \mathbb{Z}))$ where $\Gamma$ is a function field analogue of the Ihara group $\mathrm{SL}_2\left(\Z \left[1/p\right]\right)$. 

It was in our interest to formalise a direct elementary proof of surjectivity. As we will explain in the next sections, we can define and show surjectivity of the Laplacian in a more general setting. Alongside these details we also explain the formal verification of our proof. 

\subsection{Surjectivity of the Laplacian}

Let $\mathcal{T}$ be a simple graph that is locally finite, i.e., every vertex has only finitely many adjacent vertices. As for the Bruhat--Tits tree we denote the vertices of $\mathcal{T}$ by $V(\mathcal{T})$ and the edges of $\mc{T}$ by $E(\mathcal{T})$. For a vertex $v\in V(\mathcal{T})$, we let
$$E_v := \left\{ e \in E(\mathcal{T})  \mid v \in e \right\} $$
denote the set of edges containing $v$.
For $v \in V(\mathcal{T})$ denote by $\text{deg}(v) = \text{deg}_{\mathcal{T}}(v)$ the
(finite) number of neighbours of $v$ in~$\mathcal{T}$.

As before let $A$ be a commutative ring and fix a map $w\colon V(\mathcal{T}) \to A^{\times}$. We call $w$ a
\emph{weight function} on $\mathcal{T}$. Let~$M$ be an $A$-module. 

\begin{definition}
    Let $f\colon E(\mathcal{T}) \to M$ be a map. Then we define
    \[
        \Delta_w(f)\colon V(\mathcal{T}) \to M,\quad v \mapsto w(v) \sum_{e \in E_v} f(e)
    .\] 
    
    This defines a map
    \[
        \Delta_w \colon \operatorname{Maps}(E(\mathcal{T}), M) \longrightarrow \operatorname{Maps}(V(\mathcal{T}), M),
    \] 
    which we refer to as the \emph{Laplacian of weight $w$} of the graph~$\mathcal{T}$. If there is no risk of confusion, we also write $\Delta$ for $\Delta_w$.
\end{definition}
Note that as $\mathcal{T}$ is locally finite, the Laplacian is well-defined. 
In Lean, the definition reads as: 
\begin{lstlisting}
def laplace (w : V → Aˣ) (f : X.edgeSet → M) (v : V) : M :=
  w v • ∑ e ∈ X.incidenceFinset' v, f e
\end{lstlisting}

Before we state and prove the surjectivity, we need some preparations. Assume that $\mc{T}$ is a tree. Then the set $V(\mathcal{T})$ of vertices is non-empty and for every pair of vertices
$v, w \in V(\mathcal{T})$ there is a unique path from $v$ to $w$.

\noindent The length of this path is called
the \emph{distance}\footnote{As explained above, in the case of the Bruhat--Tits tree this agrees with the distance function from Section \ref{sec:distance}.} of $v$ and $w$ in $\mathcal{T}$ and is denoted by $d(v,w) = d_{\mathcal{T}}(v, w)$.
Let us fix a vertex $v_0$ of $\mathcal{T}$. We consider $v_0$ as the root vertex of $\mathcal{T}$. 
For a vertex $v$ of $\mathcal{T}$, denote by $|v| = |v|_{v_0}$ the distance
$d(v, v_0)$ of $v$ to the root vertex $v_0$.

As $\mathcal{T}$ is a tree, for any edge $e$ of $\mathcal{T}$ there exist
unique vertices $s(e)$, the \emph{source} of $e$, and $t(e)$, the \emph{target} of $e$, such that
$e = \{s(e), t(e)\}$ and
$|s(e)| < |t(e)|$.

Note that for a vertex $v$ of $\mathcal{T}$ with $|v| > 0$, there
exists a unique edge $o_v$ with $t(e) = v$. In other words, $o_v$ is the
unique adjacent edge of $v$ pointing towards $v_0$. 
Let us assume that $\mathrm{deg}(v) \ge 2$ for all $v$. Then there exists at least one adjacent edge $e$ of $v$ with
$e \neq o_v$, and then $s(e) = v$ and $e$ is pointing away from $v_0$. Note
that this does not require $|v| > 0$ so it also holds for $v = v_0$. 

\begin{definition}
For a vertex $v$ we denote by $\mathrm{Out}_v$ the
subset of $E_v$ containing the edges $e$ with $s(e) = v$ and call it the \emph{outward edge cone} of $v$.
\end{definition}

We have
\[
E_v = \begin{cases}
    \mathrm{Out}_v \cup \{ o_v \} & v \neq v_0 \\
    \mathrm{Out}_v & v = v_0
\end{cases}
.\] 
Note that for every $v \in V(\mathcal{T})$ the outward edge cone $\mathrm{Out}_v \neq \emptyset$ is non-empty, since $\mathrm{deg}(v) \ge 2$. Now
choose for every $v \in V(\mathcal{T})$ a distinguished edge $d_v \in \mathrm{Out}_v$.

\begin{proposition}[\link{https://github.com/chrisflav/bruhat-tits/blob/b8d0ceceb5cd243b4a4c20be816d591c319e77e9/BruhatTits/Harmonic/Basic.lean\#L730}{}]
    Suppose $\mathcal{T}$ is a tree such that $2 \le \text{deg}(v) < \infty$ for all $v \in V(\mathcal{T})$. Let $w$ be a weight function on $\mc{T}$. Then the Laplacian
    $\Delta_w$ is surjective.
    \label{prop:laplace-surj}
\end{proposition}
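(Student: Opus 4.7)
Given $g \in \mathrm{Maps}(V(\mathcal{T}), M)$ we must produce $f \in \mathrm{Maps}(E(\mathcal{T}), M)$ with $\Delta_w(f) = g$. Since each $w(v)$ is a unit, setting $h(v) := w(v)^{-1} g(v)$ reduces the task to finding $f$ such that $\sum_{e \in E_v} f(e) = h(v)$ for every $v$. The plan is to exploit the root $v_0$ and the orientation $|s(e)| < |t(e)|$ to define $f$ by recursion, using the distinguished outgoing edges $d_u$ as the free parameters that absorb the value of $h$, while forcing all other outgoing edges to carry the value $0$.

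The key structural observation is that $v \mapsto o_v$ is a bijection from $V(\mathcal{T}) \setminus \{v_0\}$ onto $E(\mathcal{T})$, so every edge $e$ is uniquely determined by its target and belongs to exactly one set $\mathrm{Out}_u$, namely $u = s(e)$. With this in hand, I would define $f \colon E(\mathcal{T}) \to M$ by strong recursion on $|t(e)|$ via three cases: if $e \neq d_{s(e)}$, set $f(e) := 0$; if $e = d_{v_0}$, set $f(e) := h(v_0)$; and if $e = d_u$ for some $u \neq v_0$, set $f(e) := h(u) - f(o_u)$. The last clause is well-founded because $|t(o_u)| = |u| < |u| + 1 = |t(d_u)|$.

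To verify $\Delta_w(f) = g$, fix a vertex $v$ and use the decomposition $E_{v_0} = \mathrm{Out}_{v_0}$ at the root, respectively $E_v = \{o_v\} \sqcup \mathrm{Out}_v$ otherwise, splitting $\mathrm{Out}_v = \{d_v\} \sqcup (\mathrm{Out}_v \setminus \{d_v\})$. Since $f$ vanishes on $\mathrm{Out}_v \setminus \{d_v\}$, the sum $\sum_{e \in E_v} f(e)$ collapses to $f(d_{v_0}) = h(v_0)$ at the root and to $f(o_v) + f(d_v) = f(o_v) + (h(v) - f(o_v)) = h(v)$ elsewhere; multiplying by $w(v)$ recovers $g(v)$ in both cases.

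The main obstacle is not the surjectivity argument itself but the graph-theoretic bookkeeping required to set it up: constructing the bijection $v \mapsto o_v$, extracting a choice of distinguished edge $d_v$ from $\mathrm{Out}_v$ (where the hypothesis $\deg(v) \ge 2$ enters to guarantee non-emptiness, so that the choice is possible even at the root), and arranging a well-founded recursion keyed on the distance of $t(e)$ from $v_0$. Once this API around the root-induced orientation is in place, the definition of $f$ and the verification of the Laplacian equation split into the two routine cases sketched above; the more delicate part will be the rewriting of $\sum_{e \in E_v} f(e)$ across the $\{o_v\} \sqcup \mathrm{Out}_v$ decomposition and the further split along $d_v$, which typically requires several small auxiliary lemmas on finite sums.
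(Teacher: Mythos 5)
Your proof is correct and follows essentially the same approach as the paper: you use the same globally chosen distinguished edges $d_v$, the same recursion $f(d_u) = w(u)^{-1}g(u) - f(o_u)$ (with value $0$ on the non-distinguished outgoing edges), and the same verification via the decomposition $E_v = \{o_v\} \sqcup \mathrm{Out}_v$ with the further split along $d_v$. The only difference is presentational—you phrase the construction as a single well-founded recursion on $|t(e)|$, where the paper builds partial functions $h_n$ on the balls $B_n$ and then sets $h(e) = h_{|t(e)|}(e)$, which is the same construction unfolded; in fact the paper's formal \texttt{aux} function does essentially what you describe, using junk values outside each ball.
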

\newpage
\begin{lstlisting}
lemma laplace_surjective 
  {X : SimpleGraph V} (htree : IsTree X) 
  [∀ v, Fintype (X.neighborSet v)]
  (w : V → A ˣ) (hmindeg : ∀ v, 2 ≤ X.degree v) :
  Function.Surjective (X.laplace M w) := /- ... -/
\end{lstlisting}

\begin{proof}
We prove this by constructing for any function $f\colon V(\mathcal{T}) \to M$ a preimage under $\Delta_w$. So let $f$ be such a function. For $n \in \N_0$ denote by
$B_n$ the ball of radius $n$ in~$E(\mathcal{T})$, by which we mean
\[
    B_n = \{ e \in E(\mathcal{T})  \mid  |t(e)| \le n \}
.\] We now inductively construct a function $h_n \colon B_n \to M$ for $n \ge 0$
such that

\begin{enumerate}[label=(\roman*)]
\item $h_{n+1}|_{B_n} = h_n$,
\item $f(v) = w(v) \sum_{e \in E_v} h_{n+1}(e)$ for every $v \in V(\mathcal{T})$ with
        $|v| \le n$.
\end{enumerate}
Note that the second condition is well-defined, since for $v \in V(\mathcal{T})$ with $|v| \le n$ every edge $e$ with $v \in e$ satisfies $|t(e)| \le n + 1$.

For $n = 0$, the set $B_0$ is empty and there is nothing to do.
For $n = 1$, every $e \in B_1$ satisfies $s(e) = v_0$. Hence
$B_1 = E_{v_0} = \mathrm{Out}_{v_0}$. Now define
\[
h_1 \colon B_1 \to M, \  h_1(e) = \begin{cases}
    w(v_0)^{-1} f(v_0) & e = d_{v_0} \\
    0 & e \neq d_{v_0}
\end{cases}
.\] Condition $(i)$ is trivially satisfied since $B_0 = \emptyset$. Moreover
if $v \in V(\mathcal{T})$ satisfies $|v| \le 0$, we have $v = v_0$ and
\begin{eqnarray*}
    w(v_0) \sum_{e \in E_{v_0}} h_1(e) 
    &=&  w(v_0) h_1(d_{v_0}) \\[-10pt]
    &=& w(v_0)w(v_0)^{-1} f(v_0) \\
    &=& f(v_0).  
\end{eqnarray*}

Now suppose $n \ge 2$ and suppose $h_n$ is already constructed.
For $e \in B_n$ let $h_{n+1}(e) = h_n(e)$. For $e \in E(\mathcal{T})$ with
$|t(e)| = n + 1$ let $v = s(e)$. Note that
$e \in \mathrm{Out}_v$. Define
\begin{equation}
h_{n+1}(e) = \begin{cases}
    w(v)^{-1} f(v) - h_n(o_{v}) &
    e = d_{v} \\
    0 & e \neq d_{v}.
\label{eq:auxborder}
\end{cases}
\end{equation}
By construction, condition $(i)$ is satisfied. For condition $(ii)$, let
$v \in V(\mathcal{T})$ with $|v| \le n$. For $|v| \le n - 1$ the induction hypothesis
applies, thus suppose $|v| = n$. Recall that
$E_v = \left\{ o_v \right\} \cup \mathrm{Out}_v$. Hence
\begin{align*}
    w(v) \sum_{e \in E_v} h_{n+1}(e) &=
    w(v) \left( h_{n+1}(o_v) + \sum_{e \in \mathrm{Out}_v} h_{n+1}(e) \right) \\
                                   &= w(v) \left( h_{n+1}(o_v) + h_{n+1}(d_v) \right) \\
                                   &= w(v) \left( h_n(o_v) + w(v)^{-1} f(v) - h_n(o_v) \right) \\
                                   &= f(v).
\end{align*}

Now define $h\colon E(\mathcal{T}) \to M$ by $e \mapsto h_{|t(e)|}(e)$. Using
conditions $(i)$ and $(ii)$ one now checks easily that $\Delta_w(h) = f$.

\end{proof}
\begin{remark} \ 
\begin{itemize}
    \item Note that our assumption that $deg(v)\geq 2$ is needed (at least for all but one vertex) as the simple example of a graph with two vertices and one connecting edge shows.

\item Furthermore surjectivity breaks as soon as we allow loops with an even number of edges, as the following example illustrates.
\begin{center}
    \begin{tikzpicture}
    \node[draw, circle, fill=black, inner sep=1.5pt, label=above:$1$] (A) at (0,.8) {};
    \node[draw, circle, fill=black, inner sep=1.5pt, label=left:$0$] (B) at (-.8,0) {};
    \node[draw, circle, fill=black, inner sep=1.5pt, label=right:$0$] (C) at (.8,0) {};
    \node[draw, circle, fill=black, inner sep=1.5pt, label=below:$0$] (D) at (0,-.8) {};
    
    \draw[-] (A) -- (B) node[midway, above left] {\(-x\)};
    \draw[-] (B) -- (D) node[midway, left] {\(x\)};
    \draw[-] (C) -- (D) node[midway, right] {\(-x\)};
    \draw[-] (A) -- (C) node[midway, above right] {\(x\)};
\end{tikzpicture}
\end{center}
Let $v$ denote the top vertex and let $f$ be the function on the vertices that is encoded by the number written next to them, so that e.g. $f(v) = 1$. Then if the edge from $v$ to the middle right vertex is assigned the value $x$, the values at all other edges are predetermined by the zeroes, but then it doesn't work at $v$.
\item In the proof of Proposition \ref{prop:laplace-surj}, one may assume the weight
function $w$ to be trivial, i.e., identically $1$. Indeed, $\Delta_w$ can be recovered from
the Laplacian for the trivial weight function by pre-composing with the automorphism
of
$\mathrm{Maps}(V(\mathcal{T}), M)$ given by $f \mapsto (v \mapsto w(v) f(v))$.
\end{itemize}
    
\end{remark}

\subsection{Verification of surjectivity proof}

The formal verification of the proof of Proposition \ref{prop:laplace-surj} went smoothly and follows
closely the proof outlined above. Remarkably, the LaTeX to Lean code ratio of this proof is
significantly lower than in the rest of the project: Less than 750 lines of code (including
documentation and all needed general purpose graph theory lemmas that were missing in mathlib) for 140 lines of LaTeX. Let us note here a few aspects of the formal proof.

Let \lstinline{V} be a type of vertices and \lstinline{X : SimpleGraph V},
such that every vertex has finitely many neighbours.

Firstly, the construction of the pre-image relies strongly on the arbitrary, but globally
fixed choice of the distinguished edges in the, by assumption nonempty, outward edge cones.
While informally, this might appear as a subtlety,
in Lean this is straightforward:
\begin{lstlisting}
def distinguishedEdge (w : V) 
    (hw : (X.outwardEdgeCone v₀ w).Nonempty) : X.edgeSet :=
  hw.choose
\end{lstlisting}

Secondly, to keep track of the inductive construction of the pre-image and to check the required
equations hold in each step, Lean served its role as a proof assistant very well. In particular,
the construction of $h_n$ from the informal proof can be expressed in a clear way
by a recursive function \lstinline{aux}:

\begin{lstlisting}
def aux (n : ℕ) : X.edgeSet → M := match n with
  /- arbitrary value in step zero. -/
  | 0 => fun _ ↦ 0
  | n + 1 => fun e ↦
      /- if the norm of the edge is `> n + 1`, we give a junk value -/
      if h₁ : n + 1 < X.dist v₀ (X.target v₀ e) then 0
      else
        /- if the norm of the edge is `≤ n`, we use the induction hypothesis -/
        if h₂ : X.dist v₀ (X.target v₀ e) ≤ n then aux n e
        else
          /- if the norm of the edge is `n + 1`, it is on the current border, so we compute -/
          auxBorder w f v₀ n (aux n) e
\end{lstlisting}

The \link{https://github.com/chrisflav/bruhat-tits/blob/b8d0ceceb5cd243b4a4c20be816d591c319e77e9/BruhatTits/Harmonic/Basic.lean\#L469}{\lstinline{auxBorder}} function called in the induction step case is
defined exactly
as explained above in the informal proof, i.e., it is given by the formula (\ref{eq:auxborder}).
Note that, instead of defining \lstinline{aux n}
as a function on the ball of radius $n$, we define it as a function on all edges
of~\lstinline{X}. On edges $e$ with distance $|e| > n$, we instead pick a \emph{junk value}
of $0$. This follows the recurring pattern in formalisation to prefer unconditionally
defined functions and instead only pass the necessary assumptions in proofs.

Finally, the formal counterpart for the definition of the preimage $h$ is given by
\begin{lstlisting}
def preimage (e : X.edgeSet) : M :=
  aux (X.dist v₀ (X.target v₀ e)) e
\end{lstlisting}
and from there it is only a few more lines to check that this is indeed a preimage and complete the verification of Proposition \ref{prop:laplace-surj}.

\section{Concluding remarks}
\subsection{Integration into mathlib}\label{sec:integration}
Our aim is to fully integrate the formalisation of the Cartan decomposition and of the Bruhat--Tits tree into mathlib4. 
We have started to integrate some background and some preparations for the main files (a list of merged pull-requests can be found here \link{https://github.com/leanprover-community/mathlib4/pulls?q=is\%3Apr+is\%3Aclosed+Bruhat-Tits+\%5BMerged+by+Bors\%5D}{}). Notably we have added the notion of $R$-lattices \link{https://github.com/leanprover-community/mathlib4/blob/c44e0c8ee63ca166450922a373c7409c5d26b00b/Mathlib/Algebra/Module/Lattice.lean\#L62}{} and results about the
cardinality of the projectivisation of a finite vector space over a finite field
\link{https://github.com/leanprover-community/mathlib4/blob/c44e0c8ee63ca166450922a373c7409c5d26b00b/Mathlib/LinearAlgebra/Projectivization/Cardinality.lean\#L98}{}.
We do not currently plan to integrate our results on harmonic cochains as they are rather specialised.

For integration into mathlib, besides the above mentioned replacement of
\lstinline{Subring K} by \lstinline{Algebra R K}, the API for lattices, which has been
mostly developed for submodules of $K^2$, needs to be generalised to submodules of
an arbitrary finite dimensional $K$-vector space $V$. We do not expect any major obstactles here.

Regarding the integration of the Cartan decomposition, we are confident that we have worked at
the right level of generality. For example, we work with general valuation rings for as long
as possible. While in our original formalisation we adapt the strategy used in
mathlib's development of transvections
\link{https://github.com/leanprover-community/mathlib4/blob/c44e0c8ee63ca166450922a373c7409c5d26b00b/Mathlib/LinearAlgebra/Matrix/Transvection.lean\#L13}{}, for mathlib we want to generalise as much as possible
from this file to a relative setting of an $R$-algebra $K$, where $K$ is not necessarily
a field, to avoid duplication of the core ideas.

\subsection{Formalising ongoing research} \label{subsec:FOR}

One motivation for this project was to see how much of an ongoing research work can be formalised in the Lean Theorem Prover. The aim of that work is to determine the structure of a group $H^{1}(\Gamma, \mathcal{A}_v^{\times}/\mathbb{C}_v^{\times})$ of so called rigid analytic theta cocycles. Here $\mathcal{A}_v^{\times}$ denotes the group of invertible rigid analytic functions on the Drinfeld upper half plane $\Omega_v$ attached to a local function field $K$, and $\mathbb{C}_v^{\times}$ denotes the subgroup of constant functions. Moreover, $\Gamma$ is a function field analogue of the group $\mathrm{SL}_2(\mathbb{Z}[1/p])$. There is an identification of 
$\mathcal{A}_v^{\times}/\mathbb{C}_v^{\times}$ with the group of harmonic cochains $\mathcal{H}(\mathcal{T},\Z)$ on the Bruhat--Tits tree~$\mathcal{T}$ due to Van der Put \cite[Theorem 2.1]{vdP}. The strategy to prove our structural result is to start from the sequence (\ref{SES}) and analyze the long exact sequence in group cohomology $H^i(\Gamma, -)$ crucially invoking Shapiro's lemma. This leads in particular to a map $H^{1}(\Gamma, \mathcal{A}_v^{\times}/\mathbb{C}_v^{\times}) \rightarrow H^{1}(\Gamma_0(v), \mathbb{Z})$, where~$\Gamma_0(v)$ is a classical congruence subgroup, and provides a link to function field automorphic forms. 

 As described in Section \ref{sec:application}, in the context of this formalisation project we wrote down a direct proof that the sequence (\ref{SES}) is short exact and verified it. One may/should ask whether the argument sketched above can be verified. The answer is: Not without investing significant effort in formalising further theoretical background. For example, to the best of our knowledge, rigid analytic geometry in general and the Drinfeld upper half plane in particular have not been formalised. We also crucially need some background from group cohomology. Here the state of the art is different. By work of Amelia Livingston (\cite{livingston}) the categorical abstract approach is available in mathlib. Furthermore there is work on group cohomology in low degrees \link{https://github.com/leanprover-community/mathlib4/blob/c44e0c8ee63ca166450922a373c7409c5d26b00b/Mathlib/RepresentationTheory/GroupCohomology/LowDegree.lean\#L11}{}. Nevertheless, for our concrete calculations we would have to formalise Shapiro's lemma and develop more API to manipulate our cohomology groups. Given these, it would then be feasible to verify a significant amount of the argument above. 

Let us end by remarking that we ended up using Lean to generalise some initial hypotheses. The reader might have noticed that the spaces of maps in the sequence (\ref{SES}) all have target $\Z$. When we were thinking about a direct proof of surjectivity and about formalising it, we were motivated to generalise the hypothesis to arbitrary commutative rings $A$. After the formalisation of \lstinline{laplace} and the verification of \lstinline{laplace_surjective}, as a map between $A$-valued functions it occurred to us, that instead of $A$-valued maps we should be working more generally with $M$-valued maps, where $M$ is an arbitrary $A$-module. Thanks to Lean, this was as easy as one could hope for; essentially all we had to do was to replace the occurrences of $A$ by $M$ in the code. 

\medskip

\noindent\textbf{Acknowledgments}
We are grateful to the Lean and mathlib community for their work on building a unified digital library of mathematics. Without mathlib, this project would not exist.
J.L.\ would like to thank Gebhard Böckle and O\u{g}uz Gezm\.{i}\c{s} for their support in taking our research on rigid analytic theta cocycles for function fields down the formalisation lane. We would like to thank Gebhard Böckle, Kevin Buzzard, Johan Commelin, O\u{g}uz Gezm\.{i}\c{s}, Peter Schneider and Junyan Xu for helpful comments on an earlier draft of this paper and the anonymous referee for their comments and  suggestions.

\printbibliography

\end{document}